\newtheorem{thm}{Theorem}
\newtheorem{lem}{Lemma}
\newtheorem{cor}{Corollary}
\theoremstyle{definition}
\newtheorem*{thm*}{Theorem}
\newtheorem*{rem2}{Remark}
\newtheorem*{example}{Example}
\newtheorem*{acknowledgement}{Acknowledgement}
\newcommand{\vertiii}[1]{{\left\vert\kern-0.25ex\left\vert\kern-0.25ex\left\vert
		#1 \right\vert\kern-0.25ex\right\vert\kern-0.25ex\right\vert}}
\def \min   {\text {\rm min}}
\def \lim   {\text {\rm lim}}
\begin{document}
	
	\title[]{Some non-commutative averaging theorems}
	
	\author{Saptak Bhattacharya}
	
	\address{Indian Statistical Institute\\
		New Delhi 110016\\
		India}
	\email{saptak21r@isid.ac.in}
	
	
	
	
	\begin{abstract}
		Given $n\in\mathbb{N}$ any point on the closed unit disk $\overline{\mathbb{D}}$ can be written as the average of $n$ points on the unit circle $\mathbb{S}^1$. Here we discuss a non-commutative version of this result. We prove that for any Hilbert space $\mathcal{H}$ and a state $\phi:B(\mathcal{H})\to\mathbb{C}$, $\{\phi(U): U\,\mathrm{ unitary}\}=\overline{\mathbb{D}}$. We also show that if $\mathrm{dim }\,\mathcal{H}\,\textless\,\infty$, for any $w\in\overline{\mathbb{D}}$ we can choose a unitary $U$ with atmost $3$ distinct eigenvalues such that $\phi(U)=w$. Lastly, we prove the {\it divisibility property} for any state $\phi$ on $B(\mathcal{H})$ where $\mathcal{H}$ is infinite-dimensional, showing that $\{\phi(P) : P^*=P^2=P\}=[0,1]$. 
	\end{abstract}
	\subjclass[2020]{ 15A60, 47L05}
	
	\keywords{unitaries, projections, averaging}
	\date{}
	\maketitle
	
	\section{introduction}
	
	We start off with the following question : what points $w$ on the closed unit disk $\overline{\mathbb{D}}=\{z\in\mathbb{C}:|z|\leq 1\}$ can be written as the average of $n$ points on the unit circle? To be precise, consider the map $\xi$ on the $n$-torus $\mathbb{T}^n$ given by $$\xi(z_1,\dots, z_n)=\frac{\sum_{j=1}^n z_j}{n}$$ where $|z_j|=1$ for all $j$. The problem now asks to find the range of $\xi$. To resolve this, note that $\mathrm{im}\,\xi$ satisfies two main properties :\\
	
	\textbf{1.} If $z\in\mathrm{im}\, \xi$, $e^{it} z\in\mathrm{im}\, \xi$ for all $t$. Thus, $\mathrm{im}\,\xi$ is {\it rotationally invariant}.\\
	
	\textbf{2.} $\xi(1,\omega,\dots,\omega^{n-1})=0$ and $\xi(1,1,\dots,1)=1$, where $\omega\neq 1$ is a complex $n^{th}$ root of unity.
	\medskip
	
	Hence, by the intermediate value property, $[0,1]\subset\mathrm{im}\,|\xi|$. Let $r\in[0,1]$ and choose $x\in\mathbb{T}^n$ such that $|\xi(x)|=r$. Write $\xi(x)=e^{it_0}r$ to show that $\xi(e^{-it_0}x)=r$. Thus, $[0,1]\in\mathrm{im}\,\xi$. By rotational invariance, \[\mathrm{im}\,\xi=\overline{\mathbb{D}}=\{z\in\mathbb{C}:|z|\leq 1\}.\label{e1}\tag{1}\]
	\medskip
	
	Let us view this problem through a different lens now. Consider the $C^*$-algebra $l^{\infty}_n$ of $n$-tuples of complex numbers $(z_1,\dots, z_n)$ with the norm $$||(z_1,\dots, z_n)||=\max_{j}|z_j|.$$ The set of all unitaries in this algebra is precisely $\mathbb{T}^n$. Consider the state $\phi:l^{\infty}_n\to\mathbb{C}$ given by $$\phi(z_1,\dots,z_n)=\frac{\sum z_j}{n}.$$ Then the range of this state, when restricted to the unitaries, is precisely $\overline{\mathbb{D}}$. However, this is not true for an arbitrary state. For example, let $\lambda\in(0,1/2)$ and consider the state $\phi:l^{\infty}_2\to\mathbb{C}$ given by $$\phi(z,w)=\lambda z+(1-\lambda)w.$$ When restricted to $\mathbb{T}^2$, the range of this state does not contain $0$. Note that this is the only thing that prevents the range from being the whole of $\overline{\mathbb{D}}$ because rotational invariance is still there and $\phi(1,1)=1$.
	\medskip
	
	Our main results discuss these problems in the non-commutative setup. Much unlike the commutative case, any state $\phi:M_n(\mathbb{C})\to\mathbb{C}$, when restricted only to the unitaries with atmost $3$ eigenvalues, cover the whole of $\overline{\mathbb{D}}$. If a Hilbert space $\mathcal{H}$ is infinite dimensional, then again, a state $\phi$ on $B(\mathcal{H})$ restricted to the unitaries yield $\overline{\mathbb{D}}$. Not only this, if $\phi$ is restricted to the lattice of projections in $B(\mathcal{H})$, we get the entire closed interval $[0,1]$. This is significant, since infinite dimensional states can be pathological, especially when they are not normal. As a consequence, we deduce that if $P\in B(\mathcal{H})$ is a projection, $$\{\phi(Q): 0\leq Q\leq P, Q^2=Q\}=[0,\phi(P)].$$ To elaborate further, recall that in quantum mechanical terms, projections correspond to events and states correspond to probability measures. Note that if $\mu$ is the Lebesgue measure on $[0,1]$, for every Borel $E\subset [0,1]$, the set $\{\mu(F):F\subset E\}$ is the interval $[0,\mu(E)]$. This is sometimes known as the {\it divisibility property}. Our result demonstrates that on $B(\mathcal{H})$, {\it every state} has this property, and in this sense, behaves much like the Lebesgue measure.
	\medskip
	
	\section{Main results}
	
	We state and prove our first theorem.
	\medskip
	
	\begin{thm}\label{t1}Let $n\,\textgreater\, 1$ and let $\phi:M_n(\mathbb{C})\to\mathbb{C}$ be a state. Then for every $w\in\mathbb{C}$ with $|w|\leq 1$ there exists a unitary $U$ with atmost $3$ distinct eigenvalues such that $\phi(U)=w$.\end{thm}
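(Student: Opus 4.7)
The plan is to write every unitary $U$ with at most three distinct eigenvalues in the form $U = \lambda_1 P_1 + \lambda_2 P_2 + \lambda_3 P_3$, where $|\lambda_i|=1$ and $P_1, P_2, P_3$ are pairwise orthogonal projections summing to $I$ (with some $P_i$ possibly zero). Setting $\alpha_i := \phi(P_i) \ge 0$, the triple $(\alpha_1, \alpha_2, \alpha_3)$ is a probability vector and $\phi(U) = \sum_i \lambda_i \alpha_i$. The strategy has two components: (i) show that if $\max_i \alpha_i \le 1/2$, then the set $\{\sum_i \lambda_i \alpha_i : |\lambda_i|=1\}$ is all of $\overline{\mathbb{D}}$; (ii) produce projections $P_i$ realising such a balanced triple.

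For component (i), the image $\mathcal{I} := \{\sum_i \lambda_i \alpha_i : |\lambda_i| = 1\}$ is a compact, connected, rotationally invariant subset of $\overline{\mathbb{D}}$. It contains $1$ (take all $\lambda_i = 1$), and it contains $0$: using $\sum_i \alpha_i = 1$, the hypothesis $\alpha_i \le 1/2$ is equivalent to the triangle inequality $\alpha_i \le \alpha_j + \alpha_k$, which lets three segments of lengths $\alpha_i$ close up into a triangle, yielding unit $\lambda_i$'s with $\sum_i \lambda_i \alpha_i = 0$. Hence the modulus image $|\mathcal{I}|$ is a connected subset of $[0,1]$ containing both endpoints, so $|\mathcal{I}| = [0,1]$; by rotational invariance, $\mathcal{I} = \overline{\mathbb{D}}$.

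Component (ii) is the main step and main obstacle. Write $\phi(\cdot) = \tr(\rho \, \cdot)$ with $\rho$ a density matrix having eigenvalues $p_1 \ge \cdots \ge p_n \ge 0$. If $p_1 \ge 1/2$, then since $\rho$ is Hermitian its numerical range is $[p_n, p_1]$, and $p_n \le 1/n \le 1/2 \le p_1$ provides a unit vector $v$ with $v^*\rho v = 1/2$; take $P_1 = vv^*$ and split $I - P_1$ arbitrarily into orthogonal $P_2, P_3$, so $\phi(P_2) + \phi(P_3) = 1/2$ forces both $\le 1/2$. If $p_1 < 1/2$ (which forces $n \ge 3$), I would use eigenbasis-aligned projections: put the top eigenvector alone in $P_1$ (so $\phi(P_1) = p_1 < 1/2$), and greedily add eigenvectors in decreasing order of eigenvalue into $P_2$ until some $p_j$ would push $\phi(P_2)$ above $1/2$; put $p_j$ and all remaining eigenvectors into $P_3$. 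Such a rejection must occur because otherwise $\phi(P_2) = 1 - p_1 > 1/2$. The key estimate is that $\phi(P_2) > 1/2 - p_j$ (by the rejection condition), so $\phi(P_3) = 1 - p_1 - \phi(P_2) < 1/2 + p_j - p_1 \le 1/2$, using $p_j \le p_1$. Combining with (i) yields the theorem.
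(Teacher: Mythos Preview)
Your argument is correct, but it proceeds quite differently from the paper's. The paper splits on the parity of $n$. For even $n$ it exhibits an explicit unitary with zero diagonal, namely the block-diagonal matrix $\bigoplus_{j=1}^{n/2}\begin{pmatrix}0&1\\1&0\end{pmatrix}$, which lies in $\ker\phi$ and has only the two eigenvalues $\pm 1$; path-connectedness of the set of unitaries with at most two eigenvalues and rotational invariance then give $\overline{\mathbb{D}}$. For odd $n$ it peels off the smallest weight $c_n<1/2$, applies the even case on the remaining $n-1$ coordinates to hit the target $-c_n/(1-c_n)$, and appends a $1\times 1$ block, yielding a third eigenvalue. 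By contrast, you avoid the parity split entirely: you manufacture an orthogonal decomposition $I=P_1+P_2+P_3$ with all $\phi(P_i)\le 1/2$ (via the numerical-range trick when $p_1\ge 1/2$ and a greedy partition of the eigenbasis when $p_1<1/2$), and then reduce to the purely commutative fact that three nonnegative masses summing to $1$ and each at most $1/2$ satisfy the triangle inequality, so the associated $3$-term unit-circle averages fill $\overline{\mathbb{D}}$.

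What each approach buys: the paper's route yields the sharper statement that for even $n$ two eigenvalues already suffice (and motivates the remark that this fails for odd $n$). Your route is more uniform, isolates a clean standalone lemma (``any density matrix admits a resolution of the identity into three projections with balanced masses''), and the greedy estimate $\phi(P_3)<1/2+p_j-p_1\le 1/2$ is a tidy replacement for the even/odd reduction. Both proofs ultimately hinge on the same connectedness-plus-rotational-invariance mechanism once $0$ is in the image.
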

	\begin{proof}Note that there exists a density matrix $A$ such that $$\phi(X)=\mathrm{tr}\,AX$$ for all $X\in M_n(\mathbb{C})$. Since the trace is invariant under unitary conjugation, we may assume without loss of generality that \[A=\begin{pmatrix}c_1 & &\\& \ddots &\\& & c_n \end{pmatrix}\] where each $c_j\geq 0$ and $$\sum c_j=1.$$ Then $$\phi(X)=\sum c_j\langle Xe_j, e_j\rangle$$ for all $X\in M_n(\mathbb{C})$ where $\{e_j\}_{j=1}^n$ is the standard basis of $\mathbb{C}^n$.
	\medskip
	
	We now split up into two cases. Let $n$ be even. Then $n=2k$ for some $k$. Let $S_{2,n}=\{U\in U(n):|\sigma(U)|\leq 2\}$ where $\sigma(U)$ denotes the spectrum of $U$. Observe that $S_{2,n}$ is path connected. To see this, let $U\in S_{2,n}$ and consider its spectral decomposition $$U=z_1 P + z_2 (I-P)$$ where $|z_1|=|z_2|=1$ and $P$ is a projection. Now take paths $\gamma_1,\,\gamma_2:[0,1]\to\mathbb{S}^1$ such that $\gamma_1(0)=\gamma_2(0)=1$, $\gamma_1(1)=z_1$ and $\gamma_2(1)=z_2$. Then the path $$U(t)=\gamma_1(t) P + \gamma_2(t) (I-P)$$ connects $U$ with $I$.
	Since $\phi(S_{2,n})$ is rotationally invariant, by the same argument as $\eqref{e1}$, it suffices to show that $\mathrm{ker}\,\phi\cap S_{2,n}\neq\emptyset$. Take the unitary \[\widetilde{U}=\bigoplus_{j=1}^k\begin{pmatrix} 0 & 1\\ 1 & 0\end{pmatrix}\] and note that $\phi(\widetilde{U})=0$ since all the diagonal elements of $\widetilde{U}$ are $0$. Thus, for even $n$ we can cover the closed unit disk by restricting $\phi$ to unitaries with atmost two eigenvalues. 
\medskip

Let $n$ be odd. Using unitary invariance again, write $$\phi(X)=\sum c_j\langle Xe_j, e_j\rangle$$ where the $c_j$'s are arranged in descending order, each $c_j\geq 0$ and $\sum c_j=1$. Since $$c_n=\min\, c_j,$$ $c_n\,\textless\,\frac{1}{2}$. Similar to the even case, let $S_{3,n}=\{U\in U(n): |\sigma(U)|\leq 3\}$. Then, $S_{3,n}$ is path connected and $\phi(S_{3,n})$ is rotationally invariant. Also, $I\in S_{3,n}$, so $1\in \phi(S_{3,n})$. It suffices to show the existence of $U\in S_{3,n}$ such that $\phi(U)=0$. This means we have to choose a unitary $U$ such that $$\sum_{j=1}^{n-1}\frac{c_j}{1-c_n}\langle Ue_j, e_j\rangle=\frac{-c_n\langle Ue_n, e_n\rangle}{1-c_n}.$$ Note that $$\frac{c_n}{1-c_n}\,\textless\, 1.$$ By the even case, choose a unitary $U_0\in U(n-1)$ with $|\sigma(U_0)|\leq 2$ and $$\sum_{j=1}^{n-1}\frac{c_j}{1-c_n}\langle U_0\,e_j, e_j\rangle=\frac{-c_n}{1-c_n}$$ Then $$\widetilde{U}=U_0\oplus 1$$ gives the desired unitary in $S_{3,n}$.\end{proof}
\medskip

\begin{rem2}Note that for odd $n$ we cannot necessarily cover the closed unit disk by restricting $\phi$ only to unitaries with atmost two eigenvalues. Indeed, let $$\phi(X)=\frac{\mathrm{tr}\, X}{3}$$ be the normalized trace on $M_3(\mathbb{C})$. If a unitary with zero trace has two distinct eigenvalues $z$ and $w$, either $|z|$ or $|w|$ must be $2$, which is clearly false.\end{rem2}
\medskip

\begin{cor}\label{c1}Let $\phi:M_n(\mathbb{C})\to\mathbb{C}$ be a  linear functional. Then $\{\phi(U):U\in U(n)\}=\overline{\mathbb{D}}$. \end{cor}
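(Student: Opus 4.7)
The plan is to reduce to Theorem~\ref{t1} via a singular value decomposition. First I would invoke Hilbert--Schmidt duality on $M_n(\mathbb{C})$ to write $\phi(X)=\operatorname{tr}(AX)$ for a unique $A\in M_n(\mathbb{C})$, then factor $A=V\Sigma W^*$ with $V,W\in U(n)$ and $\Sigma=\operatorname{diag}(\sigma_1,\ldots,\sigma_n)$ having non-negative diagonal entries. Set $r=\sum_j\sigma_j$; if $r=0$ then $\phi\equiv 0$ and the image collapses to $\{0\}$, so I may assume $r>0$.

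The key computation will be that $U\mapsto U'=W^*UV$ is a bijection of $U(n)$ onto itself under which
\[
\phi(U)=\operatorname{tr}(V\Sigma W^*U)=\operatorname{tr}(\Sigma U')=r\,\psi(U'),
\]
where $\psi(X):=\operatorname{tr}((\Sigma/r)X)$. Since $\Sigma/r$ is a diagonal density matrix, $\psi$ is a state, and Theorem~\ref{t1} then gives $\{\psi(U'):U'\in U(n)\}=\overline{\mathbb{D}}$. Consequently the image of $\phi$ on unitaries equals $r\,\overline{\mathbb{D}}$, i.e.\ a closed disk centred at the origin of radius $\|A\|_1$.

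There is no serious obstacle; the heavy lifting is already done by Theorem~\ref{t1}, and SVD's sole role is to diagonalise $A$ modulo unitary rotations on either side (so that the positive-matrix machinery of Theorem~\ref{t1} becomes applicable). The one point worth flagging is how to parse the statement itself: the corollary's ``$=\overline{\mathbb{D}}$'' must be read as ``equals a closed disk about the origin'' whose radius depends on $\phi$ (for $\phi\equiv 0$ literal equality with the unit disk obviously fails). If one normalises so that $\|A\|_1=1$---which is automatic for any state since then $\operatorname{tr}A=1$ and $A\geq 0$---the equality with $\overline{\mathbb{D}}$ holds as written.
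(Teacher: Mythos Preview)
Your proposal is correct and follows essentially the paper's approach: both represent $\phi$ via trace duality and use a factorization of the representing matrix (SVD for you, polar decomposition $B=U|B|$ in the paper) to absorb a unitary into the argument and thereby reduce to the state case of Theorem~\ref{t1}. Your remark that the literal equality with $\overline{\mathbb{D}}$ requires the normalisation $\|A\|_1=1$ is exactly what the paper imposes at the outset by assuming $\|\phi\|=1$.
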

\begin{proof}Assume without loss of generality that $||\phi||=1$. Let $\phi(X)=\mathrm{tr}\, BX$ where $B\in M_n(\mathbb{C})$ with $\mathrm{tr}\,|B|=1$. Consider the polar decomposition $B=U|B|$ where $U$ is unitary. Then $$\phi(X)=\mathrm{tr}\, |B|XU$$ for all $X\in M_n(\mathbb{C})$. Let $|w|\leq 1$. By Theorem \ref{t1} there exists a unitary $V$ such that $\mathrm{tr}\, |B|V=0$. But this means $\phi(VU^*)=0$, completing the proof.
 \end{proof}
\medskip

Note that $U(n)$ is the set of all extreme points of the closed unit ball of $M_n(\mathbb{C})$ with the operator norm. For a general $C^*$-algebra, the closed unit ball is the closed convex hull of the unitaries, which is a famous result due to Russo and Dye \cite{rus}. 
\medskip

Corollary \ref{c1} demonstrates that for any functional $\phi$ on $M_n(\mathbb{C})$ with $||\phi||=1$, $\phi$ restricted only to the extreme points of the unit ball cover the whole disk $\overline{\mathbb{D}}$. This fails for arbitrary $C^*$-algebras, infact for $l^{\infty}_2$, as demonstrated in the introduction. Since it holds for $M_n(\mathbb{C})$ with the operator norm, it is interesting to wonder what other matrix norms have this property. 
\medskip

As an application of Theorem \ref{t1}, we show that this happens for all the Ky-Fan $k$-norms.
\medskip

\begin{lem}\label{l1}Let $\phi:M_n(\mathbb{C})\to\mathbb{C}$ be a linear functional. Then there exists a rank one operator $x\otimes y^*$ with $||x||=||y||=1$ such that $\phi(x\otimes y^*)=0$.\end{lem}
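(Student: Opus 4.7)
The plan is to represent $\phi$ via the trace, reducing the claim to an elementary statement in $\mathbb{C}^{n}$. By Riesz representation there is a matrix $B\in M_n(\mathbb{C})$ with $\phi(X)=\mathrm{tr}(BX)$ for every $X$. For unit vectors $x,y\in\mathbb{C}^n$, the rank one operator $x\otimes y^*$ is the matrix $xy^*$, so by the cyclic property of the trace
\[
\phi(x\otimes y^*)=\mathrm{tr}(Bxy^*)=y^*Bx=\langle Bx,y\rangle.
\]
The task therefore becomes: produce unit vectors $x,y$ with $y\perp Bx$.

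If $B=0$ then $\phi\equiv 0$ and any choice of $x,y$ works. Otherwise there exists some $x_0\in\mathbb{C}^n$ with $Bx_0\neq 0$; set $x=x_0/\|x_0\|$, so that $Bx\neq 0$. Since $n\geq 2$, the orthogonal complement $(Bx)^{\perp}$ has complex dimension $n-1\geq 1$, so we may pick any unit vector $y\in(Bx)^{\perp}$. Then $\langle Bx,y\rangle=0$, giving $\phi(x\otimes y^*)=0$ as required.

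There is no real obstacle here; the statement is essentially a codimension count, and the only thing to be careful about is the degenerate case $B=0$ (handled separately) and the implicit assumption $n\geq 2$, which matches the standing hypothesis of the paper. The reason to isolate this as a lemma is that it will be combined with the rotational invariance argument (as in \eqref{e1}) and Theorem \ref{t1} to pass from vanishing to covering all of $\overline{\mathbb{D}}$, which is the mechanism used to deduce the Ky--Fan $k$-norm application.
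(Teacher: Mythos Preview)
Your proof is correct and follows exactly the same approach as the paper's: represent $\phi(X)=\mathrm{tr}(BX)$ and observe that $\phi(x\otimes y^*)=\langle Bx,y\rangle$, then pick $y\perp Bx$. The paper's version is simply terser, omitting the case split on $B=0$ and the explicit mention of $n\geq 2$, both of which you handle carefully.
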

\begin{proof}As before, let $$\phi(X)=\mathrm{tr}\, BX$$ for some $B\in M_n(\mathbb{C})$. Choose unit vectors $x$ and $y$ such that $\langle Bx, y\rangle=\phi(x\otimes y^*)=0$.\end{proof}
\medskip

For $1\leq k\leq n$, the Ky-Fan $k$-norm of $X\in M_n(\mathbb{C})$ is defined by $$||X||_{(k)}=\sum_{j=1}^k\sigma_j$$ where $\sigma_1\geq\dots\geq\sigma_n$ are the singular values of $X$ arranged in descending order. This is an extremely important class of unitarily invariant norms, mainly due to the {\it Fan dominance theorem} (see \cite{rb}). 
\medskip

Note that for $k=1$ and $n$ this gives the operator norm and the trace norm respectively. We now ask, what are the extreme points of the closed unit ball $\{X\in M_n(\mathbb{C}):||X||_{(k)}\leq 1\}$? For the operator norm, the extreme points are precisely the unitaries, while for the trace norm, they are the rank one matrices $x\otimes y^*$ with $||x||=||y||=1$. 
\medskip

For $1\,\textless\, k\,\textless\, n$ the set of all extreme points is \[\mathrm{Ext}(k)=\{\frac{U}{k}:U\in U(n)\}\cup\{x\otimes y^* : ||x||=||y||=1\}.\label{e2}\tag{2}\]  This is due to Grone and Marcus (see \cite{gr, hj}). Thus, we have two connected components, unlike the case for the operator or the trace norm.
\medskip

\begin{thm}\label{t2} Let $\phi:\big(M_n(\mathbb{C}), ||.||_{(k)}\big)\to\mathbb{C}$ be a linear functional with $||\phi||=1$. Then $\phi(\mathrm{Ext(k)})=\overline{\mathbb{D}}$.\end{thm}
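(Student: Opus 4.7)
The plan is to split into cases based on which part of the dual Ky-Fan norm is saturated. Writing $\phi(X)=\mathrm{tr}\,BX$, the dual of the Ky-Fan $k$-norm is known to be $\|B\|^*_{(k)}=\max\{\|B\|_\infty,\|B\|_1/k\}$, so the hypothesis $\|\phi\|=1$ forces at least one of $\|B\|_\infty=1$ or $\|B\|_1=k$. The inclusion $\phi(\mathrm{Ext}(k))\subseteq\overline{\mathbb{D}}$ is immediate from $|\phi(X)|\le\|X\|_{(k)}=1$ for $X\in\mathrm{Ext}(k)$, so the content is the reverse inclusion, which I will establish using the appropriate piece of $\mathrm{Ext}(k)$ from \eqref{e2} depending on the case.

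Suppose first that $\|B\|_1=k$. Then $|B|/k$ is a positive matrix with trace $1$, so $\psi(X):=\mathrm{tr}(|B|X)/k$ defines a state on $M_n(\mathbb{C})$. By Theorem \ref{t1} (or Corollary \ref{c1}), $\psi(U(n))=\overline{\mathbb{D}}$. Take the polar decomposition $B=W|B|$ with $W$ unitary; for any unitary $V$,
\[
\phi(V/k)=\tfrac{1}{k}\mathrm{tr}(W|B|V)=\tfrac{1}{k}\mathrm{tr}(|B|VW)=\psi(VW).
\]
As $V$ ranges over $U(n)$ so does $VW$, hence $\{\phi(V/k):V\in U(n)\}=\psi(U(n))=\overline{\mathbb{D}}$, and these matrices $V/k$ lie in $\mathrm{Ext}(k)$.

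Next suppose $\|B\|_\infty=1$. I will use the rank-one extreme points. The set $R=\{x\otimes y^*:\|x\|=\|y\|=1\}$ is connected (provided $n\geq 2$; the $n=1$ case is trivial) and $\phi(R)$ is rotationally invariant since $\phi(e^{i\theta}x\otimes y^*)=e^{i\theta}\phi(x\otimes y^*)$. By Lemma \ref{l1}, $0\in\phi(R)$, and by definition of the operator norm together with compactness, there exist unit vectors $x_0,y_0$ with $|\phi(x_0\otimes y_0^*)|=\|B\|_\infty=1$. The same intermediate value argument used in \eqref{e1} (applied to $|\phi|$ on a path in $R$ from a zero to a modulus-one point, combined with rotational invariance) yields $\phi(R)=\overline{\mathbb{D}}$.

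The only subtlety I anticipate is verifying the formula for the dual Ky-Fan norm; this is classical but deserves a citation (e.g.\ to \cite{rb}). Everything else is straightforward assembly of Theorem \ref{t1}, Lemma \ref{l1}, and the rotational-invariance trick that has already been used twice in the paper.
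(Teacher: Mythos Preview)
Your proof is correct and follows essentially the same skeleton as the paper's --- identify which component of $\mathrm{Ext}(k)$ carries a point of modulus one, observe that the same component meets $\ker\phi$, and finish with connectedness plus rotational invariance. The one genuine difference is in how the case split is made: you invoke the explicit dual-norm identity $\|B\|_{(k)}^{*}=\max\{\|B\|_{\infty},\|B\|_{1}/k\}$, whereas the paper avoids this by noting that the convex function $|\phi|$ attains its maximum on the compact unit ball at an extreme point, which must therefore lie in $E_{1}=\{U/k\}$ or $E_{2}=\{x\otimes y^{*}\}$. The paper's route is a touch more economical (no external citation needed), while yours is more explicit and in Case~1 actually gives the slightly sharper conclusion $\phi(E_{1})=\overline{\mathbb{D}}$ directly from Corollary~\ref{c1}, without a separate appeal to the intermediate value argument.
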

\begin{proof} Let $$E_1=\{\frac{U}{k}:U\in U(n)\}$$ and $$E_2=\{x\otimes y^*:||x||=||y||=1\}$$
	\medskip
	
be the two components of $\mathrm{Ext}(k)$. Both $E_1$ and $E_2$ are path connected and invariant under left and right multiplication by unitaries. By corollary \ref{c1} and lemma \ref{l1}, both $\mathrm{ker}\,\phi\,\cap\, E_1$ and $\mathrm{ker}\,\phi\,\cap\, E_2$ are non-empty. 
\medskip

Since $\phi$ attains its norm, either there exists $A\in E_1$ such that $\phi(A)=1$ or $B\in E_2$ such that $\phi(B)=1$. 
\medskip

By connectedness of $\phi(E_1)$ and $\phi(E_2)$, either $[0,1]\subset\phi(E_1)$ or $[0,1]\subset\phi(E_2)$. By rotational invariance, $\phi(\mathrm{Ext}(k))=\overline{\mathbb{D}}$.\end{proof}
\medskip

Recall that a norm $|||.|||$ on $M_n(\mathbb{C})$ is said to be unitarily invariant if $$|||UAV|||=|||A|||$$ for all $U, V\in U(n)$, $A\in M_n(\mathbb{C})$. We now ask, does Theorem \ref{t2} hold for any unitarily invariant norm on $M_n(\mathbb{C})$? It indeed does, and the proof of Theorem \ref{t2} itself suggests an approach.
\medskip

\begin{thm}\label{t3} Let $|||.|||$ be a unitarily invariant norm on $M_n(\mathbb{C})$ and let $E$ be the set of extreme points of its closed unit ball $\{X:|||X|||\leq 1\}$. Let $\phi:M_n(\mathbb{C},|||.|||)\to\mathbb{C}$ be a functional with $||\phi||=1$. Then $\phi(E)=\overline{\mathbb{D}}$.\end{thm}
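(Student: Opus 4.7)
The plan is to mirror the strategy of the proof of Theorem \ref{t2}, but working on a single $U(n)\times U(n)$-orbit rather than on the components of $E$ globally. The key observation is that since $|||\cdot|||$ is unitarily invariant, its unit ball, and hence the set $E$ of extreme points, is preserved by the action $A\mapsto UAV$. Consequently, for any $A_0\in E$, the orbit $O_{A_0}=\{UA_0V:U,V\in U(n)\}$ lies inside $E$; it is path-connected (as a continuous image of the connected group $U(n)\times U(n)$), it is rotationally invariant (take $U=e^{it}I$), and $\phi(O_{A_0})\subseteq\overline{\mathbb{D}}$ since $|\phi(UA_0V)|\leq|||UA_0V|||=|||A_0|||=1$. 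Thus, once I produce an $A_0\in E$ with $\phi(A_0)=1$ together with a pair $(U,V)$ satisfying $\phi(UA_0V)=0$, the image $\phi(O_{A_0})$ becomes a connected rotationally invariant subset of $\overline{\mathbb{D}}$ containing $0$ and $1$, which forces $\phi(O_{A_0})=\overline{\mathbb{D}}$ and completes the proof.

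The norm-attaining $A_0\in E$ comes for free from Bauer's maximum principle: the closed unit ball of $|||\cdot|||$ is compact and convex, so $|\phi|$ attains its maximum $\|\phi\|=1$ at some extreme point, which can then be rotated by a unimodular scalar (again inside $E$) so that $\phi(A_0)=1$. The main content is producing $(U,V)$ with $\phi(UA_0V)=0$. Write $\phi(X)=\text{tr}(BX)$, polar-decompose $A_0=U_0 S$ with $S=|A_0|\geq 0$, and pass to an orthonormal eigenbasis of $S$, so that $S=\text{diag}(\sigma_1,\dots,\sigma_n)$. By cyclic invariance of the trace,
\[
\phi(UA_0V)=\text{tr}(S\cdot VBUU_0)=\sum_{j=1}^n \sigma_j\,(VBUU_0)_{jj}.
\]
As $U,V$ range over $U(n)$, the matrix $M:=VBUU_0$ ranges over all matrices with the same singular values as $B$, so it suffices to find one such $M$ with zero diagonal. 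Letting $\mu_1,\dots,\mu_n$ be the singular values of $B$ and using $n\geq 2$, the explicit cyclic-shift matrix
\[
M=\sum_{j=1}^n \mu_j\, e_j e_{j+1\bmod n}^*
\]
has all diagonal entries zero, and a direct computation of $M^*M$ shows that its singular values are a permutation of $(\mu_1,\dots,\mu_n)$, finishing the construction.

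I expect the main obstacle to be this last construction, namely exhibiting a matrix with prescribed singular values and zero diagonal. It is a special case of the Horn--Thompson theorem on diagonals of matrices with given singular-value spectrum, but the cyclic-shift presentation above makes it completely elementary. Every remaining ingredient -- norm attainment, the $U(n)\times U(n)$-invariance of $E$, path-connectedness of the orbit, and the final rotational-invariance/intermediate-value closing argument -- is a direct adaptation of ideas already used in the proof of Theorem \ref{t2}.
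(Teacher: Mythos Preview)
Your proposal is correct and follows the same orbit-based strategy as the paper: pick a norm-attaining extreme point $A_0$, show that its unitary orbit inside $E$ is path-connected, rotationally invariant, and meets $\ker\phi$, then conclude via the intermediate value argument. The only substantive difference is in how the zero is found. The paper works with the \emph{one-sided} orbit $O(A_0)=\{UA_0:U\in U(n)\}$ and simply observes that $U\mapsto\phi(UA_0)$ is a linear functional evaluated on unitaries, so the earlier Corollary~\ref{c1} (which rests on Theorem~\ref{t1}) immediately provides a unitary $U$ with $\phi(UA_0)=0$. You instead use the two-sided orbit $\{UA_0V\}$ and give a direct, self-contained construction of a matrix with the singular values of $B$ and zero diagonal via the cyclic shift $\sum_j\mu_j\,e_j e_{j+1}^*$. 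Your route avoids appealing to Theorem~\ref{t1} and is fully elementary; the paper's route is shorter precisely because it recycles that earlier result. Both yield the same conclusion with the same overall architecture.
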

\begin{proof}Since the norm is unitarily invariant, the group $U(n)$ induces an action on $E$ given by $$U\cdot X=UX$$ for all $X\in E, U\in U(n)$. This splits $E$ into a disjoint union of orbits. Given $A\in E$, let $O(A)=\{UA:U\in U(n)\}$ denote the orbit of $A$. $O(A)$ is compact, path connected and invariant under left multiplication by unitaries. 
\medskip

Consider the functional $\psi_A:M_n(\mathbb{C})\to\mathbb{C}$ given by $\psi_A(Y)=\phi(YA)$ for all $Y\in M_n(\mathbb{C})$. By Theorem \ref{t1} there exists a unitary $U$ such that $$\psi_A(U)=\phi(UA)=0.$$ This implies $\mathrm{ker}\,\phi\cap O(A)\neq\emptyset$ for all $A\in E$. Now, there exists $A_0\in E$ such that $\phi(A_0)=1$. Then $\phi(O(A_0))$ is path-connected, rotationally invariant, and contains $0$ and $1$. Therefore, $$\phi(O(A_0))=\phi(E)=\overline{\mathbb{D}}$$.
 \end{proof}
 \medskip

It is natural to ask whether Theorem \ref{t1} generalizes to infinite dimensions. 
\medskip

Let $\mathcal{H}$ be an infinite dimensional Hilbert space and let $\phi:B(\mathcal{H})\to\mathbb{C}$ be a state. Let $U(\mathcal{H})$ denote the group of unitaries on $\mathcal{H}$. Unlike the finite dimensional case, this is not compact, but still path-connected. 
\medskip

We wish to know about $\{\phi(U):U\in U(\mathcal{H})\}.$ The next two lemmas will be crucial. Given any Hilbert space $\mathcal{K}$ (can be infinite dimensional), we denote by $\mathrm{dim} \,\mathcal{K}$ the cardinality of an orthonormal basis of $\mathcal{K}$. This is consistent since any two orthonormal bases have the same cardinality. For two projections $P$ and $Q$, we say they are {\it unitarily equivalent} if there exists a unitary $U$ such that $$P=UQU^*.$$ We denote this equivalence by $P\sim Q$. We caution the reader that this notation is not standard in literature where $P\sim Q$ denotes equivalence in the Murray-von Neumannn sense (see \cite{kad, ren}).
\medskip

\begin{lem}\label{l2} Let $P$ and $Q$ be two projections in $B(\mathcal{H})$ onto subspaces $S$ and $W$ respectively. Let $S^{\perp}$ and $W^{\perp}$ be the respective orthogonal complements. Then $P\sim Q$ if and only if $\mathrm{dim}\, S=\mathrm{dim}\, W$ and $\mathrm{dim}\, S^{\perp}=\mathrm{dim}\, W^{\perp}$.\end{lem}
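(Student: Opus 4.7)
The plan is a direct characterization of when two projections sit in the same unitary orbit, splitting cleanly into necessity and sufficiency.

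For the forward direction, suppose $P = UQU^*$ for some unitary $U$. The first step is to observe that $U$ restricts to a bijection between $W$ and $S$: if $v \in W$, then $U v = U Q v = P(U v)$, so $Uv \in S$, and conversely if $w \in S$, then $U^* w \in W$ by the symmetric argument. Since $U$ is an isometry, $U|_W : W \to S$ is a unitary isomorphism, hence $\dim S = \dim W$. The same reasoning applied to $I - P = U(I - Q)U^*$ yields $\dim S^\perp = \dim W^\perp$.

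For the converse, assume the two pairs of dimensions agree. The idea is to build $U$ by matching orthonormal bases. Choose an orthonormal basis $\{e_\alpha\}_{\alpha \in A}$ of $S$ and $\{f_\alpha\}_{\alpha \in A}$ of $W$ indexed by the same set $A$, which is possible because $\dim S = \dim W$. Similarly choose orthonormal bases $\{e'_\beta\}_{\beta \in B}$ of $S^\perp$ and $\{f'_\beta\}_{\beta \in B}$ of $W^\perp$ indexed by the same set $B$. Define $U$ on the orthonormal basis $\{f_\alpha\} \cup \{f'_\beta\}$ of $\mathcal{H}$ by $U f_\alpha = e_\alpha$ and $U f'_\beta = e'_\beta$, and extend by linearity and continuity. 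Because $U$ sends one orthonormal basis to another, it is unitary.

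It remains to verify $P = UQU^*$, which is a routine check on the basis $\{e_\alpha\} \cup \{e'_\beta\}$: since $U^* e_\alpha = f_\alpha \in W$, we have $Q U^* e_\alpha = f_\alpha$ and $U Q U^* e_\alpha = e_\alpha = P e_\alpha$, while $U^* e'_\beta = f'_\beta \in W^\perp$ gives $Q U^* e'_\beta = 0 = P e'_\beta$. The two operators then agree on a total set, hence everywhere.

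There is no real obstacle here; the only thing to be careful about is that the index sets $A$ and $B$ may be infinite (and of different cardinalities), so the construction of the orthonormal bases and the resulting unitary must be phrased in a basis-independent way, but this is handled by the standard fact that orthonormal bases of equi-dimensional Hilbert spaces can be matched bijectively.
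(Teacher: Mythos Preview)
Your proof is correct and is exactly the standard argument that the paper has in mind; the paper itself records the proof simply as ``Immediate.'' You have supplied the routine details the author omitted, with no deviation in strategy.
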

\begin{proof}Immediate.\end{proof}

\begin{lem}\label{l3}Let $\phi:B(\mathcal{H})\to\mathbb{C}$ be a state, where $\mathcal{H}$ is infinite dimensional. Then there exists a projection $P$ such that $P\sim(I-P)$ and $\phi(P)=\frac{1}{2}$. \end{lem}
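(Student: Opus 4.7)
The plan is to perturb an arbitrary projection $P_0$ satisfying $P_0\sim I-P_0$ via a norm-continuous family of unitary conjugations and then invoke the intermediate value theorem to land on the value $\tfrac12$.

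First I would exhibit such a $P_0$. Because $\mathcal{H}$ is infinite dimensional, any orthonormal basis can be partitioned into two subsets of the same cardinality; this gives a decomposition $\mathcal{H}=S\oplus S^{\perp}$ with $\mathrm{dim}\, S=\mathrm{dim}\, S^{\perp}$, and Lemma~\ref{l2} yields $P_0\sim I-P_0$ for the projection $P_0$ onto $S$. Pick a unitary $U$ with $UP_0U^*=I-P_0$ and, using Borel functional calculus, write $U=e^{iH}$ for some bounded self-adjoint $H$. Then $V(t)=e^{itH}$, $t\in[0,1]$, is a norm-continuous path of unitaries from $I$ to $U$.

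Define $f(t)=\phi\bigl(V(t)P_0V(t)^*\bigr)$. Since $V(t)P_0V(t)^*$ is a projection, $f$ is real-valued and lies in $[0,1]$; continuity of $f$ follows from boundedness of $\phi$ together with norm-continuity of $t\mapsto V(t)P_0V(t)^*$. At the endpoints $f(0)=\phi(P_0)$ and $f(1)=\phi(UP_0U^*)=\phi(I-P_0)=1-\phi(P_0)$, so the intermediate value theorem provides $t^*\in[0,1]$ with $f(t^*)=\tfrac12$. Setting $P=V(t^*)P_0V(t^*)^*$ gives a projection with $\phi(P)=\tfrac12$; since $V(t^*)$ is unitary, the ranges of $P$ and $I-P$ agree in dimension with those of $P_0$ and $I-P_0$, and Lemma~\ref{l2} then yields $P\sim I-P$. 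I do not foresee a serious obstacle here; the only mildly technical step is producing the continuous path from $I$ to $U$ inside $U(\mathcal{H})$, for which the representation $U=e^{iH}$ via functional calculus is the cleanest tool.
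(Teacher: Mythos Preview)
Your argument is correct and follows essentially the same route as the paper: start from a projection $P_0$ with $P_0\sim I-P_0$, observe that $\phi(P_0)$ and $\phi(I-P_0)=1-\phi(P_0)$ straddle $\tfrac12$, and apply the intermediate value theorem along a path of unitary conjugates. The only cosmetic difference is that the paper invokes path-connectedness of $U(\mathcal H)$ abstractly while you construct an explicit path via $U=e^{iH}$, and you spell out the final verification that $P\sim I-P$ (which the paper leaves implicit).
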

\begin{proof}Let $\{e_{\lambda}\}_{\lambda\in J}$ be an orthonormal basis of $\mathcal{H}$, indexed by the set $J$. Since $J$ is infinite, we choose $S\subset J$ such that $|S|=|J\setminus S|=|J|$. Let $P$ be the projection onto the closed subspace $$\overline{\mathrm{span}\{e_{\lambda}:\lambda\in S\}}$$ By lemma \ref{l2}, there exists a unitary $V$ such that $$(I-P)=VPV^*.$$ If $\phi(P)=1/2$ we are done. Otherwise, assume without loss of generality that $\phi(P)\,\textless\,1/2$. Then $$\phi(I-P)=\phi(VPV^*)\,\textgreater\,\frac{1}{2}.$$ Consider the map $\xi:U(\mathcal{H})\to\mathbb{R}$ given by $$\xi(U)=\phi(UPU^*)$$ for all $U\in U(\mathcal{H})$. Since $\xi(I)\,\textless\,1/2$ and $\phi(V)\,\textgreater\,1/2$, intermediate value property gives a unitary $\widetilde{U}$ such that $$\xi(\widetilde{U})=\phi(\widetilde{U}P\widetilde{U}^*)=\frac{1}{2}.$$\end{proof}
\medskip

\begin{cor}\label{c2}Let $\phi:B(\mathcal{H})\to\mathbb{C}$ be a state. Then $\{\phi(U):U\in U(\mathcal{H})\}=\overline{\mathbb{D}}$.\end{cor}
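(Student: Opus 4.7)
The plan is to reduce to two facts already established in the text: (i) Lemma \ref{l3} gives a projection $P$ with $\phi(P)=1/2$, and (ii) the same rotational-invariance/intermediate-value argument used in the commutative warm-up \eqref{e1} can be recycled once we exhibit a single unitary in $\ker\phi$ and a single unitary with $\phi$-value of modulus $1$.

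First I would manufacture the zero. Take the self-adjoint unitary $U_0 = 2P - I = P - (I-P)$, where $P$ is the projection from Lemma \ref{l3}. Since $\phi$ is linear and $\phi(I)=1$, one has $\phi(U_0) = 2\phi(P) - 1 = 0$. (Interestingly, only the value $\phi(P)=1/2$ from Lemma \ref{l3} is used here; the unitary equivalence $P\sim I-P$ will become essential later for the projection version, but is not needed for this corollary.)

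Next I would invoke path-connectedness of $U(\mathcal{H})$ in the norm topology: every unitary $U$ may be written as $e^{iA}$ for a bounded self-adjoint $A$ (choose any branch of $\log$ via the Borel functional calculus), and $t\mapsto e^{itA}$ is a continuous path from $I$ to $U$. Thus there is a norm-continuous path $\gamma:[0,1]\to U(\mathcal{H})$ with $\gamma(0)=I$ and $\gamma(1)=U_0$. The composition $t\mapsto |\phi(\gamma(t))|$ is a continuous real-valued function on $[0,1]$ taking the values $1$ and $0$ at the endpoints, so by the intermediate value theorem its image contains $[0,1]$. Hence for every $r\in[0,1]$ there is a unitary $U_r$ with $|\phi(U_r)|=r$.

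Finally I would close the argument using rotational invariance, exactly as in \eqref{e1}: if $\phi(U_r)=re^{i\alpha}$ and $w=re^{i\theta}$ with $r\in[0,1]$, then $e^{i(\theta-\alpha)}U_r$ is a unitary whose $\phi$-image equals $w$. This gives $\overline{\mathbb{D}}\subseteq\phi(U(\mathcal{H}))$, and the reverse inclusion is immediate since $\|\phi\|=\|U\|=1$. The only step with any content is the production of $U_0\in\ker\phi$, and that has already been reduced to Lemma \ref{l3}; everything else is IVT plus rotation, so I expect no substantive obstacle.
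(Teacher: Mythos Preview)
Your proposal is correct and follows essentially the same route as the paper: both produce a unitary in $\ker\phi$ by taking $U_0=2P-I$ with $P$ the projection from Lemma~\ref{l3}, and then conclude via path-connectedness of $U(\mathcal{H})$, the intermediate value argument, and rotational invariance exactly as in \eqref{e1}. The only difference is cosmetic---you spell out the connectedness via $t\mapsto e^{itA}$, whereas the paper simply invokes it---so there is nothing to add.
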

\begin{proof}We already have rotational invariance, connectedness and $1\in\{\phi(U):U\in U(\mathcal{H})\}$. It suffices to show $$\mathrm{ker}\,\phi\cap\,U(\mathcal{H})\neq\emptyset.$$ By lemma $\ref{l3}$ there exists a projection $P$ such that $\phi(P)=1/2$. Hence, $U=2P-I$ is the desired unitary.\end{proof}
We now state and prove our next theorem.
\medskip

\begin{thm}\label{t4} Let $\mathcal{H}$ be an infinite dimensional Hilbert space and let $\phi:B(\mathcal{H})\to\mathbb{C}$ be a state. Then $\{\phi(P):P^*=P,\, P^2=P\}=[0,1]$.\end{thm}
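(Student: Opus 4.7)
The plan is to reach every $t \in [0,1]$ in two stages. First, achieve every dyadic rational $k/2^n$ by a projection lying in the ``big'' set $\mathcal{P}_\infty := \{P \text{ a projection in } B(\mathcal{H}): \dim P\mathcal{H} = \dim(I-P)\mathcal{H} = \dim \mathcal{H}\}$. Second, fill in the remaining non-dyadic values by the intermediate value theorem along norm-continuous paths inside $\mathcal{P}_\infty$. The endpoints $t = 0$ and $t = 1$ are handled by $P = 0$ and $P = I$.

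For the dyadic stage I would iterate Lemma \ref{l3}. The base step takes $E_{1,1} = P$ and $E_{1,2} = I - P$ with $P$ from Lemma \ref{l3}; the explicit construction there makes both $E_{1,j}$ have rank and co-rank equal to $\dim \mathcal{H}$. Inductively, suppose we have mutually orthogonal $E_{n,1}, \ldots, E_{n, 2^n}$ summing to $I$ with $\phi(E_{n,j}) = 2^{-n}$ and each $E_{n,j}\mathcal{H}$ infinite-dimensional. For each $j$ the functional $\psi_j(X) := 2^n \phi(X)$ (with $X \in B(E_{n,j}\mathcal{H})$ extended by zero on $(I - E_{n,j})\mathcal{H}$) is a state on $B(E_{n,j}\mathcal{H})$; applying Lemma \ref{l3} to $\psi_j$ yields $F_j \leq E_{n,j}$ with $\phi(F_j) = 2^{-n-1}$ and $F_j \sim_{B(E_{n,j}\mathcal{H})} (E_{n,j} - F_j)$, so both halves remain infinite-dimensional. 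Set $E_{n+1, 2j-1} = F_j$ and $E_{n+1, 2j} = E_{n,j} - F_j$. Then for $0 < k < 2^n$, the finite sum $P^{(n)}_k := \sum_{j=1}^k E_{n,j}$ is a projection with $\phi(P^{(n)}_k) = k/2^n$ by linearity of $\phi$, and both $P^{(n)}_k$ and $I - P^{(n)}_k$ have rank $\dim \mathcal{H}$, so $P^{(n)}_k \in \mathcal{P}_\infty$.

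For the second stage I would show $\mathcal{P}_\infty$ is norm path-connected: given $P, P' \in \mathcal{P}_\infty$, Lemma \ref{l2} supplies a unitary $U$ with $U P U^* = P'$, and the spectral theorem writes $U = e^{iH}$ for a bounded self-adjoint $H$, so $t \mapsto e^{itH} P e^{-itH}$ is a norm-continuous path inside $\mathcal{P}_\infty$ from $P$ to $P'$. Now for any $t \in (0,1)$ pick dyadics $r, s$ with $0 < r < t < s < 1$, take the corresponding $P^{(n)}_k$'s from the first stage, and connect them by such a path $\gamma \colon [0,1] \to \mathcal{P}_\infty$. Since $\phi$ is norm-continuous (bounded) and takes real values on projections, the IVT applied to $\phi \circ \gamma$ produces some $u$ with $\phi(\gamma(u)) = t$.

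The main obstacle is precisely the non-normality of $\phi$: the naive approach would expand $t = \sum_n \epsilon_n 2^{-n}$ and try to assemble a projection $\sum_n \epsilon_n E_{m_n}$ of $\phi$-value $t$, but $\phi$ need not preserve countable orthogonal sums. That is why non-dyadic values have to be obtained indirectly, through the IVT/path argument in $\mathcal{P}_\infty$, rather than by a term-by-term series. A small bookkeeping point is to verify that all projections built in the first stage actually sit in the \emph{same} component $\mathcal{P}_\infty$ (so Lemma \ref{l2} applies to pairs of them), which is why I insist at each inductive step that both halves coming from Lemma \ref{l3} have dimension equal to $\dim \mathcal{H}$.
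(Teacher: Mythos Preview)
Your proposal is correct and rests on the same ingredients as the paper's proof: iterate Lemma~\ref{l3} inside corners to manufacture projections with prescribed $\phi$-values, use Lemma~\ref{l2} to place them in a single unitary orbit, and invoke the intermediate value theorem along a norm-continuous unitary-conjugation path. The paper's organization is simply more economical: rather than building the entire dyadic partition $\{E_{n,j}\}$, it constructs only a single descending chain $P \geq Q \geq \cdots$ with $\phi$-values $1/2, 1/4, \ldots, 1/2^n$, checks at each stage that the current projection $Q_n$ still satisfies $Q_n \sim I-Q_n$, and observes that the unitary orbit of $Q_n$ therefore already sweeps out $[1/2^n,\,1-1/2^n]$ under $\phi$; taking the union over $n$ finishes. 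Your full-tree construction is more work than needed but perfectly valid, and your closing remark about keeping all pieces of dimension $\dim\mathcal{H}$ (not merely infinite) is exactly the bookkeeping the paper also performs when it verifies $\dim Q = \dim(I-Q)$.
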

\begin{proof}Use lemma \ref{l3} to choose a projection $P$ such that $P\sim I-P$ and $\phi(P)=1/2$. By a similar argument as in lemma \ref{l3}, choose a projection $Q\leq P$ such that $Q\sim P-Q$ and $\phi(Q)=1/4$. Let $\{x_{\alpha}\}_{\alpha\in J}$ and $\{y_{\alpha}\}_{\alpha\in J}$ be orthonormal bases of the subspaces corresponding to $Q$ and $P-Q$ respectively. Then \[\begin{aligned}\mathrm{dim}\, P&=|\{x_{\alpha}\}_{\alpha\in J}\cup\{y_{\alpha}\}_{\alpha\in J}|\\&=|J|=\mathrm{dim}\, Q=\mathrm{dim}\, P-Q.\end{aligned}\] Now, $\mathrm{dim}\, P=\mathrm{dim}\, I-P$ and $I-Q= (I-P)+(P-Q)$ is the sum of two mutually orthogonal projections of the same dimension. Hence $\dim\, Q=\dim\, I-Q$. By lemma \ref{l2} there exists a unitary $V$ such that $$I-Q=VQV^*.$$ Now $\phi(I-Q)=3/4$, so by intermediate value property, $$[\frac{1}{4},\,\frac{3}{4}]\subset\{\phi(UQU^*):U\in U(\mathcal{H})\}.$$ Repeating the argument, this time with $Q$ in place of $P$ and continuing, we see that $$[\frac{1}{2^n},\, 1-\frac{1}{2^n}]\subset\{\phi(P): P\,\,\mathrm{projection}\}$$ for all $n$. Taking union over $n$, we are done.\end{proof}
\medskip

\begin{cor}\label{c3}Let $\mathcal{H}$ be infinite dimensional and let $\phi:B(\mathcal{H})\to\mathbb{C}$ be a state. Let $P$ be a projection. Then $\{\phi(Q):0\leq Q\leq P,\, Q^2=Q\}=[0,\phi(P)]$.\end{cor}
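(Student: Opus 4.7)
The plan is to reduce to Theorem \ref{t4} by compressing $\phi$ to the corner $PB(\mathcal{H})P\cong B(P\mathcal{H})$. First I would dispose of the trivial case $\phi(P)=0$: by positivity of $\phi$, $0\leq Q\leq P$ forces $0\leq\phi(Q)\leq\phi(P)=0$, so the left-hand set collapses to $\{0\}=[0,\phi(P)]$.

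Assume now $\phi(P)\,\textgreater\, 0$ and that $P\mathcal{H}$ is infinite-dimensional, so that Theorem \ref{t4} applies to states on $B(P\mathcal{H})$. Define $\psi:B(P\mathcal{H})\to\mathbb{C}$ by $\psi(X)=\phi(\widetilde{X})/\phi(P)$, where $\widetilde{X}\in B(\mathcal{H})$ is the extension of $X$ by zero on $(P\mathcal{H})^{\perp}$. Routine checks show $\psi$ is linear and positive with $\psi(I_{P\mathcal{H}})=\phi(P)/\phi(P)=1$, so $\psi$ is a state on $B(P\mathcal{H})$.

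Finally, applying Theorem \ref{t4} to $\psi$ yields $\{\psi(R):R^2=R=R^*,\,R\in B(P\mathcal{H})\}=[0,1]$. Projections $R\in B(P\mathcal{H})$ biject with projections $Q\in B(\mathcal{H})$ satisfying $Q\leq P$ via $R\mapsto Q=\widetilde{R}$ (since $Q\leq P$ forces $\mathrm{range}(Q)\subseteq P\mathcal{H}$), and under this correspondence $\phi(Q)=\phi(P)\psi(R)$. Multiplying by $\phi(P)$ gives $[0,\phi(P)]\subseteq\{\phi(Q):0\leq Q\leq P,\,Q^2=Q\}$; the reverse inclusion is immediate from positivity of $\phi$. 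The one substantive step is the invocation of Theorem \ref{t4}; the compression and the identification of projections are routine.
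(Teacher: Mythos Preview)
Your approach is exactly the paper's: compress $\phi$ to a state on $B(P\mathcal{H})$ and apply Theorem~\ref{t4}. You are more careful than the paper in flagging the extra hypothesis that $P\mathcal{H}$ be infinite-dimensional; the paper's proof silently needs this too, since Theorem~\ref{t4} is stated only for infinite-dimensional spaces. That hypothesis is in fact necessary for the corollary as written: for the vector state $\phi(X)=\langle Xe,e\rangle$ and the rank-one projection $P=e\otimes e^*$, one has $\phi(P)=1$, yet the only projections $Q\leq P$ are $0$ and $P$, so the left-hand set is $\{0,1\}\neq[0,1]$. (More generally, for finite-rank $P$ and $\phi$ the normalised trace on $PB(\mathcal{H})P$, the values $\phi(Q)$ form a finite set.) With the implicit assumption $\dim P\mathcal{H}=\infty$, your argument is complete and identical in substance to the paper's.
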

\begin{proof}Nothing to show if $\phi(P)=0$. If not, let $\mathcal{K}=\mathrm{im}\, P$. Decompose $$\mathcal{H}=\mathcal{K}\oplus\mathcal{K}^{\perp}$$ and consider the state $\psi:B(\mathcal{K})\to\mathbb{C}$ given by $$\psi(A)=\frac{\phi(A\oplus O)}{\phi(P)}.$$ Apply Theorem \ref{t4} to complete the proof.\end{proof}
\medskip

This demonstrates the {\it divisibility property} for any state on $B(H)$. A similar result is well known for normal tracial states on a von-Neumann algebra with no minimal projection, also called a {\it diffuse} algebra. Recall that a state $\phi$ on a von-Neumann algebra $\mathcal{M}$ is said to be normal if for every bounded increasing net $\{A_{\lambda}\}$ of self-adjoint elements in $\mathcal{M}$, $$\phi(\lim\, A_{\lambda})=\sup\,\phi(A_{\lambda})$$ where $\lim\, A_{\lambda}$ denotes the strong operator limit.
\medskip

\begin{thm*}
	Let $\mathcal{M}$ be a diffuse von-Neumann
	 algebra and let $\tau:M\to\mathbb{C}$ be a normal tracial state (i.e. $\tau(AB)=\tau(BA)$ for all $A,B \in\mathcal{M}$). If $P\in\mathcal{M}$ is a projection, $\{\tau(Q):0\leq Q\leq P, Q^2=Q\}=[0,\tau(P)]$.
\end{thm*}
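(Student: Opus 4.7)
The plan is to reduce the statement to the classical divisibility of nonatomic positive measures, by locating inside $\mathcal{M}$ a diffuse maximal abelian $*$-subalgebra that contains the given projection.

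First I would dispose of the trivial case $\tau(P)=0$, and then replace $\mathcal{M}$ by the corner $P\mathcal{M}P$, which has $P$ as its unit and carries the normal tracial state $\tau_P := \tau/\tau(P)$. This corner is again diffuse: a minimal projection $R\leq P$ of $P\mathcal{M}P$ would satisfy $R\mathcal{M}R = R(P\mathcal{M}P)R = \mathbb{C}R$, forcing $R$ to be minimal in $\mathcal{M}$ as well, contrary to hypothesis. It therefore suffices to prove the following: if $\mathcal{N}$ is a diffuse von Neumann algebra and $\tau$ a normal tracial state on $\mathcal{N}$, then the range of $\tau$ on projections of $\mathcal{N}$ is all of $[0,1]$.

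Next I would pick a maximal abelian $*$-subalgebra $\mathcal{A}\subseteq\mathcal{N}$ and argue that $\mathcal{A}$ is itself diffuse. Suppose on the contrary that $p\in\mathcal{A}$ were minimal in $\mathcal{A}$. Since $\mathcal{A}$ is abelian, $p$ commutes with every $a\in\mathcal{A}$, so $a = pap + (I-p)a(I-p)$. The abelian algebra $p\mathcal{A}p$ has only $0$ and $p$ as projections, whence $p\mathcal{A}p = \mathbb{C}p$ and $pap = \lambda_a p$ for some scalar $\lambda_a$. For any $x\in p\mathcal{N}p$, a direct computation then gives $xa = \lambda_a x = ax$, so $x$ commutes with all of $\mathcal{A}$; maximality forces $x\in\mathcal{A}$, and hence $p\mathcal{N}p \subseteq p\mathcal{A}p = \mathbb{C}p$, contradicting the diffuseness of $\mathcal{N}$.

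With $\mathcal{A}$ known to be diffuse, I would invoke the standard identification $\mathcal{A}\cong L^{\infty}(X,\mu)$ for a nonatomic finite measure space $(X,\mu)$. Normality of $\tau|_{\mathcal{A}}$ yields a nonnegative $g\in L^{1}(X,\mu)$ with $\int g\,d\mu=1$ and $\tau(f) = \int_X f g\,d\mu$; the measure $\nu := g\,d\mu$ is itself nonatomic. The classical intermediate value property for nonatomic measures, recalled in the introduction, then supplies, for each $r\in[0,1]$, a measurable $E\subseteq X$ with $\nu(E) = r$, and the projection $Q := \chi_E \in \mathcal{A} \subseteq \mathcal{N}$ satisfies $\tau(Q) = r$. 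The principal obstacle is the middle step, verifying that a masa inside a diffuse von Neumann algebra is itself diffuse; the rest is a routine application of the standard representation theorem for abelian von Neumann algebras and the classical divisibility of nonatomic measures. (I note in passing that the tracial property of $\tau$ is not actually used in this approach; normality alone suffices.)
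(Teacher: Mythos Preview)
The paper does not give its own proof of this statement: it is quoted as a known result and the reader is simply referred to Kadison--Ringrose. So there is nothing in the paper to compare your argument against.

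Your argument is correct and is essentially one of the standard routes to this fact. The reduction to the corner $P\mathcal{M}P$ is fine, and your key step---that a masa in a diffuse von Neumann algebra is itself diffuse---is argued correctly (the decomposition $a=\lambda_a p+(I-p)a(I-p)$ for $a\in\mathcal{A}$ does force every $x\in p\mathcal{N}p$ into $\mathcal{A}'\cap\mathcal{N}=\mathcal{A}$). One small point you glossed over: writing $\mathcal{A}\cong L^{\infty}(X,\mu)$ with $\mu$ \emph{finite} presupposes that $\mathcal{A}$ is $\sigma$-finite, which you have not checked. This is easily repaired by first passing to the central support $z$ of $\tau$ (the support of a normal tracial state is central), since $\tau(Q)=\tau(zQ)$ for every projection $Q$ and $\tau$ is faithful on $z\mathcal{N}$; alternatively one can work with a merely localizable $\mu$ and note that $\nu=g\,d\mu$ is still finite and nonatomic. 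Your closing remark is also right: nowhere do you use $\tau(AB)=\tau(BA)$, so your proof actually yields the conclusion for any \emph{normal} state on a diffuse von Neumann algebra---which is a genuine strengthening of the cited statement and sits nicely alongside the paper's own Theorem~4 and Corollary~3 for $B(\mathcal{H})$.
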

\medskip

See \cite{kad} for details.
\medskip

In view of Theorem \ref{t4} it is of interest to ask whether we can put any restriction on the projections so that the result still holds. This happens for normal states, but not for a general state.
\medskip

\begin{thm}\label{t5} Let $\mathcal{H}$ be an infinite dimensional Hilbert space and let $\phi:B(\mathcal{H})\to\mathbb{C}$ be a normal state. Then $\{\phi(P):P\,\,\mathrm{projection},\,\dim P\,\textless\,\infty\}=[0,1)$.\end{thm}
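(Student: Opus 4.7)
My plan is to invoke the standard representation of a normal state on $B(\mathcal{H})$ as $\phi(X) = \mathrm{tr}(AX)$ for a positive, trace-class $A$ with $\mathrm{tr}\, A = 1$, and then to exploit the spectral decomposition $A = \sum_i \lambda_i\, e_i \otimes e_i^*$, where $\lambda_1 \geq \lambda_2 \geq \cdots \geq 0$ with $\sum_i \lambda_i = 1$. Trace-class operators are compact, so $\lambda_i \to 0$. Note that for the asserted equality with $[0,1)$ to hold, the range of $A$ must be infinite-dimensional (otherwise the finite-rank support projection already has $\phi$-value $1$), so the proof is understood to run under that hypothesis.

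First I would establish the inclusion in $[0,1)$. Nonnegativity is automatic from $A, P \geq 0$. Strictness of the upper bound follows from
\[
\phi(P) \;=\; \sum_i \lambda_i \langle Pe_i, e_i\rangle \;\leq\; \sum_i \lambda_i \;=\; 1,
\]
with equality forcing $\langle Pe_i, e_i\rangle = 1$, i.e.\ $e_i \in \mathrm{range}(P)$, for every $i$ with $\lambda_i > 0$; since there are infinitely many such $i$ and $P$ has finite rank, this is impossible.

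Next I would establish the reverse inclusion. Fix $t \in [0,1)$ and set $S_k = \sum_{i=1}^{k} \lambda_i$; choose $k$ minimal with $S_k > t$ and let $s = t - S_{k-1} \in [0, \lambda_k)$. If $s = 0$, the projection $\sum_{i=1}^{k-1} e_i \otimes e_i^*$ already works. Otherwise, use $\lambda_N \to 0$ to pick $N > k$ with $\lambda_N \leq s$, and consider $v_\theta = \cos\theta\, e_k + \sin\theta\, e_N$, a unit vector in the orthogonal complement of $e_1,\dots,e_{k-1}$. A direct computation gives
\[
\phi(v_\theta \otimes v_\theta^*) \;=\; \lambda_k \cos^2\theta + \lambda_N \sin^2\theta,
\]
a continuous function of $\theta$ ranging over $[\lambda_N, \lambda_k]$, an interval containing $s$. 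The intermediate value theorem then supplies $\theta_0$ with $\phi(v_{\theta_0}\otimes v_{\theta_0}^*) = s$, and $P = \sum_{i=1}^{k-1} e_i \otimes e_i^* + v_{\theta_0} \otimes v_{\theta_0}^*$ is a rank-$k$ projection satisfying $\phi(P) = t$.

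The only genuine obstacle is the deployment of $\lambda_N \to 0$: this is precisely where the normality of $\phi$ enters essentially, reflecting how compactness of the density operator spreads its mass thinly enough to let a single two-dimensional rotation interpolate continuously between successive partial sums. Everything else is partial-sum bookkeeping together with a one-parameter calculation in a two-dimensional subspace.
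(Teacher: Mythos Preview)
Your proof is correct and takes a genuinely different route from the paper's. The paper also starts from the density-operator representation $\phi(X)=\mathrm{tr}\,AX$ and the spectral decomposition $A=\sum_j \lambda_j\, v_j\otimes v_j^*$, but then stays within its running theme of unitary conjugation: for each $n$ it takes the spectral projection $P_n=\sum_{j=1}^n v_j\otimes v_j^*$ with $\phi(P_n)>1-\tfrac{1}{n}$, picks a rank-$n$ projection $Q_n\perp P_n$ (so $\phi(Q_n)\le\tfrac{1}{n}$), notes that $P_n$ and $Q_n$ are unitarily conjugate, and applies the intermediate value theorem along a path in $U(\mathcal{H})$ to obtain $[\tfrac{1}{n},1-\tfrac{1}{n}]$ in the range; a union over $n$ finishes. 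Your argument is instead constructive: you locate $t$ between consecutive partial sums $S_{k-1}\le t<S_k$ and tune a single rank-one piece in $\mathrm{span}\{e_k,e_N\}$ to hit the residual $s=t-S_{k-1}$ exactly, producing an explicit rank-$k$ projection with $\phi$-value $t$. The paper's version buys thematic unity with the earlier proofs; yours buys an explicit witness and, more importantly, actually establishes the inclusion $\phi(P)<1$ for finite-rank $P$, which the paper's proof does not address at all. Your observation that the stated equality with $[0,1)$ fails whenever the density operator has finite rank (the support projection then gives $\phi(P)=1$) is correct and identifies a genuine gap in the theorem's hypotheses.
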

\begin{proof}Since $\phi$ is normal, there exists a positive, trace class operator $A$ with $\mathrm{tr}\, A=1$ (also called a density operator) such that $$\phi(X)=\mathrm{tr}\, AX$$ for all $X\in B(\mathcal{H})$ (see \cite{kad, ren}). Consider the spectral decomposition $$A=\sum\lambda_j v_j\otimes v_j^*$$ where $\{v_j\}$ is orthonormal. Choose $n$ such that $$\sum_{j=1}^n\lambda_j\,\textgreater\,1-\frac{1}{n}.$$ Let $P_n=\sum_{j=1}^nv_j\otimes v_j^*$. Then $\dim\, P=n$. Choose another rank $n$ projection $Q_n\perp P_n$. Then $$\phi(P_n)\,\textgreater\,1-\frac{1}{n}$$ and $$\phi(Q_n)\leq\frac{1}{n}$$. Since both $P_n$ and $Q_n$ have the same rank, there exists a unitary $V_n$ such that $P_n=V_nQ_nV_n^*$. By the intermediate value property, $$[\frac{1}{n},1-\frac{1}{n}]
	\subset \{\phi(P):P\,\,\mathrm{projection},\,\dim P\,\textless\,\infty\}$$ for all $n$. Taking union over $n$, we are done.  
\end{proof}

Theorem \ref{t5}, however, does not hold for non-normal states, as the following example demonstrates :
\medskip

\begin{example}Let $K(\mathcal{H})$ be the ideal of compact operators on $\mathcal{H}$. Consider the {\it Calkin algebra} $\mathcal{A}=B(\mathcal{H})/K(\mathcal{H})$ (see \cite{arv} for more details) and let $\psi:\mathcal{A}\to\mathbb{C}$ be a state. Lift $\psi$ to a state $\phi:B(\mathcal{H})\to\mathbb{C}$ given by $$\phi(X)=\psi(X+K(\mathcal{H}))$$ for all $X\in B(\mathcal{H})$. Then $\phi(X)=0$ whenever $X$ is a compact operator, in particular, $\phi(P)=0$ for all finite rank projections $P$. $\phi$ is not normal since $I$ can be approximated strongly with a net of finite rank projections.\end{example}
\medskip

\begin{rem2}Note that the range of a state $\phi$, when restricted to the operator interval $\mathcal{E}=\{O\leq A\leq I\}$ is the whole of $[0,1]$. The lattice of projections is the set of all extreme points of $\mathcal{E}$. Theorem \ref{t4} shows that restricting $\phi$ solely to the extreme points of $\mathcal{E}$ is sufficient to cover $[0,1]$.\end{rem2}
\medskip

 \begin{acknowledgement} I thank my PhD supervisor Prof. Tanvi Jain for going through the manuscript thoroughly and giving her ideas to improve the exposition. I also thank Prof. Manjunath Krishnapur for the discussion I had with him while presenting my work on this problem.   \end{acknowledgement}

\textbf{Conflict of interest :} The author declares no conflict of interest.
\medskip

\textbf{Data availability :}  No available data has been used. 
\medskip

	\bibliographystyle{amsplain}
	
 \end{document}